\newcommand{\nexteq}{\displaybreak[0]\\ &=}
\newcommand{\nnexteq}{\notag\displaybreak[0]\\ &=}
\newtheorem{thm}{Theorem}
\newtheorem{lem}{Lemma}
\newcommand{\C}{\mathbb{C}}
\newcommand{\R}{\mathbb{R}}
\title{Complex Hadamard matrices attached to even orthogonal schemes of class $4$}
\author{Takuya Ikuta and Akihiro Munemasa}
\begin{document}
\maketitle

\begin{abstract}
A complex Hadamard matrix is a square matrix $W$ with complex 
entries of absolute value $1$ satisfying $WW^\ast= nI$, 
where $\ast$ stands for the Hermitian transpose and $I$ is the 
identity matrix of order $n$. 
In this paper, we give constructions of complex Hadamard matrices
in the Bose--Mesner algebra of a certain 4-class symmetric association scheme. 
Moreover, we determine the Nomura algebras to show that the resulting matrices are
not decomposable into nontrivial generalized tensor products.
\end{abstract}

\section{Introduction}\label{sec:Intro}
A complex Hadamard matrix is a square matrix $W$ with complex 
entries of absolute value $1$ satisfying
$WW^\ast= nI$, 
where $\ast$ stands for the Hermitian transpose and $I$ is the identity matrix of order $n$.
They are the natural generalization of real Hadamard matrices. 
Complex Hadamard matrices appear frequently in various branches of mathematics and 
quantum physics.

A type-II matrix, or an inverse orthogonal matrix,
is a square matrix $W$ with nonzero complex entries satisfying
$W{W^{(-)}}^\top=nI$, where $W^{(-)}$ denotes the entrywise inverse of $W$. 
Obviously, a complex Hadamard matrix is a
type-II matrix.

In \cite{MI}, we gave a method to find a complex Hadamard matrix in the Bose--Mesner algebra
of a symmetric association scheme. Applying this result, we classified complex Hadamard matrices
in the Bose--Mesner algebra of a certain 3-class association scheme.
In this paper, we construct certain complex Hadamard matrices 
in the Bose--Mesner algebra of a 4-class association scheme $(X,\{R_i\}_{i=0}^4)$ with the first eigenmatrix:
\begin{equation} \label{P}
P=\begin{bmatrix}
1&\frac12(q-2)q^{2m-1}&\frac12q^{2m}&q(q^{2m-2}-1)&q-2 \\
1&\frac12(q-2)q^{m-1}&\frac12q^{m}&-(q-1)(q^{m-1}+1)&q-2 \\
1&-\frac12(q-2)q^{m-1}&-\frac12q^{m}&-(q-1)(q^{m-1}-1)&q-2 \\
1&\frac12q^m&-\frac12q^m&0&-1 \\
1&-\frac12q^m&\frac12q^m&0&-1
\end{bmatrix},
\end{equation}
where $q$ and $m$ are positive integers with $q\geq 4$ and $m\geq2$.
Then $|X|=q^{2m}-1$, 
$R_4$ is a disconnected relation, and $R_2$ defines a strongly regular graph.
If $q$ is a power of $2$, an even orthogonal scheme 
is an example of an association scheme with the first eigenmatrix (\ref{P}) (see \cite[Chapter 12.1]{BCN}).
If $m=1$, then $R_3=\emptyset$, and 
this scheme reduces to an even orthogonal scheme of class $3$
which we considered in \cite{MI}.

For a type-II matrix $W\in M_X(\C)$ and $a,b\in X$, we define column vectors
$Y_{ab}$ by setting 
\[
(Y_{ab})_x=
\frac{W_{xa}}{W_{xb}}\quad(x\in X).
\]
The {\em Nomura algebra} $N(W)$ of $W$ is the algebra of matrices
in $M_n(\C)$ such that $Y_{ab}$ is an eigenvector for all $a,b\in X$.
It is shown in \cite[Theorem 1]{JMN} that the Nomura algebra is 
a Bose--Mesner algebra. 

Throughout this paper, we denote by $\frak{X}=(X,\{R_i\}_{i=0}^4)$ 
a symmetric association scheme with the first eigenmatrix (\ref{P}).
Let $A_0,A_1,A_2,A_3,A_4$ be the adjacency matrices of $\frak{X}$.
Let $w_0=1,w_1,w_2,w_3,w_4$ be nonzero complex numbers, and
set
\begin{equation}\label{W} 
W=\sum_{j=0}^4w_j A_j,
\end{equation}
\begin{equation} \label{aij0802}
a_{i,j}=\frac{w_i}{w_j}+\frac{w_j}{w_i} \quad (0\leq i<j\leq 4).
\end{equation}

The main purpose of this paper is to prove the following:
\begin{thm} \label{thm:main}
Assume that
\begin{equation} \label{assump}
w_4=1.
\end{equation}
\begin{enumerate}
 \item[\rm{(i)}] Assume $w_1=1$. 
 Then, the matrix $W$ in {\rm (\ref{W})} is a complex Hadamard matrix if and only if
 \[
 w_2^2+\frac{2(q^{2m}-2)}{q^{2m}}w_2+1=0 \quad \text{and} \quad w_3=1.
 \]
 \item[\rm{(ii)}] Assume
  \begin{equation} \label{eq:1024}
  a_{0,1}=\frac{2(q^{4m-2}-(q+2)q^{2m-1}+2)}{(q^{2m-1}+q-2)q^{2m-1}}.
  \end{equation}
 Then, the matrix $W$ in {\rm (\ref{W})} is a complex Hadamard matrix if and only if
  \begin{align}
  w_2=&-\frac{(q-1)q^{2m-1}w_1+q^{2m-1}+q-2}{(q^{2m-1}-1)q}, \label{eq:3-2} \\
  w_3=&1. \nonumber
  \end{align}
\end{enumerate}
\end{thm}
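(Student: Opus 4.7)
The plan is to translate the complex Hadamard condition into a linear system in the variables $a_{i,j}$ of (\ref{aij0802}). Since $W$ lies in the Bose--Mesner algebra of a symmetric scheme, $W$ is symmetric, and once $|w_j|=1$ for all $j$ one has $W^\ast=\bar W=W^{(-)}$; hence $W$ is a complex Hadamard matrix if and only if it is a type-II matrix satisfying $|w_j|=1$ for $j=0,\ldots,4$. Decomposing $W$ and $W^{(-)}$ over the primitive idempotents $E_i$, with eigenvalues $\lambda_i=\sum_j w_j P_{ij}$ and $\mu_i=\sum_j w_j^{-1}P_{ij}$, the type-II identity $WW^{(-)}=nI$ collapses (via $E_iE_j=\delta_{ij}E_i$) to the five scalar equations $\lambda_i\mu_i=n$, which after pairing conjugate cross terms read
\[
\sum_{j=0}^4 P_{ij}^{\,2}+\sum_{0\le j<k\le 4}a_{j,k}\,P_{ij}P_{ik}=n\qquad(i=0,\ldots,4).
\]
This is the characterization from \cite{MI}, specialized to the matrix $P$ in (\ref{P}).

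Substituting the entries of (\ref{P}) gives an explicit linear system over $\mathbb{Q}(q,q^m)$ in the ten unknowns $a_{j,k}$. The assumption $w_4=1$ forces $a_{0,4}=2$ and $a_{j,4}=a_{0,j}$ for $j=1,2,3$. For part (i), the further hypothesis $w_1=1$ gives $a_{0,1}=2$, $a_{1,2}=a_{0,2}$, $a_{1,3}=a_{0,3}$, leaving the three unknowns $a_{0,2},a_{0,3},a_{2,3}$. Since rows $3$ and $4$ of $P$ coincide in columns $0,3,4$ and are negatives in columns $1,2$, I would first add and subtract the equations for $i=3$ and $i=4$; this should cleanly isolate $a_{0,3}$ and $a_{2,3}$ and yield $a_{0,3}=2$ (i.e.\ $w_3=1$), after which a remaining row determines $a_{0,2}$, rearranging to the stated quadratic in $w_2$. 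Unimodularity $|w_2|=1$ is then automatic: the quadratic has real coefficients, constant term $1$, and linear coefficient of absolute value $2|1-2q^{-2m}|<2$, so its roots form a complex conjugate pair on the unit circle.

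For part (ii), one keeps $w_1$ free and imposes only the specified value of $a_{0,1}$ from (\ref{eq:1024}). The same row-$3$/row-$4$ manipulation should again force $a_{0,3}=2$ and hence $w_3=1$, and the remaining rows should combine to yield the linear relation (\ref{eq:3-2}) expressing $w_2$ rationally in $w_1$, $q$, and $q^m$. That the prescribed $a_{0,1}$ is already consistent with $|w_1|=1$ follows from the short identity $a_{0,1}+2=4(q^{2m-1}-1)^2/\bigl((q^{2m-1}+q-2)q^{2m-1}\bigr)$, which together with $a_{0,1}<2$ places $a_{0,1}$ in $(-2,2)$; verifying $|w_2|=1$ from (\ref{eq:3-2}) and $w_1\bar w_1=1$ then becomes an algebraic identity over $\mathbb{Q}(q,q^m)$ after clearing denominators.

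The main obstacle I anticipate is the algebraic bookkeeping of the five rational-function equations and the choice of linear combinations that isolate $a_{0,3}$ before $a_{0,2}$ and $a_{2,3}$. The structural features noted above -- the near antisymmetry of rows $3$ and $4$ and the vanishing entries $P_{3,3}=P_{4,3}=0$ -- should keep the manipulations tractable, but the verification of $|w_2|=1$ from (\ref{eq:3-2}) in part (ii) is likely the most delicate computation, and will require careful exploitation of the precise form of $a_{0,1}$ in (\ref{eq:1024}).
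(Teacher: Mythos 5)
Your reduction of the Hadamard condition to the scalar equations $\sum_j P_{ij}^2+\sum_{j<k}a_{j,k}P_{ij}P_{ik}=n$ is correct (this is exactly the paper's Lemma~\ref{thm:2}), but the plan of solving these as \emph{a linear system in the $a_{i,j}$} only closes in case (i), and fails in case (ii). Using $\sum_jP_{kj}=0$ for $k\neq 0$, each equation can be written as $\sum_{i<j}P_{ki}P_{kj}(a_{i,j}-2)=n$. Since $P_{3,3}=P_{4,3}=0$, the equations for $k=3,4$ contain no variable $a_{i,3}$ whatsoever; under $w_4=1$ their difference reduces to the tautology $a_{0,1}+a_{2,4}=a_{0,2}+a_{1,4}$, and their sum just fixes $a_{1,2}$ (Lemma~\ref{lem:a01}). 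So your proposed "row-$3$/row-$4$ manipulation" cannot isolate $a_{0,3}$ or $a_{2,3}$ — it yields one independent condition, on $a_{1,2}$ only. In case (ii) the remaining unknowns after imposing $w_4=1$ and the given $a_{0,1}$ are $a_{0,2},a_{0,3},a_{1,3},a_{2,3}$, against which you have only the two equations $e_1=e_2=0$: the linear system is underdetermined by two dimensions, and no amount of row manipulation will produce $a_{0,3}=2$ or (\ref{eq:3-2}). What is missing is precisely the nonlinear compatibility relations among the $a_{i,j}$ coming from the fact that they all arise from a single tuple $(w_i)$ — the polynomials $g$ and $h$ of Lemma~\ref{thm:1} (e.g.\ $g(a_{0,1},a_{0,2},a_{1,2})=0$ pins down $a_{0,2}$ to two candidates once $a_{0,1}$ and $a_{1,2}$ are known). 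The paper's proof of (ii) consists of Gr\"obner-basis eliminations in the ideal generated by the $g$'s, $h$'s and $e_k$'s (Lemmas~\ref{lem:a01}--\ref{lem:a02d}, an $88{,}000$-second Magma computation), and there is no indication that the argument degenerates to linear algebra. Equivalently, phrased in $(w_2,w_3)$ the equations $e_1=e_2=0$ are genuinely nonlinear, and showing their solution set is exactly $\{w_3=1,\ w_2 \text{ as in }(\ref{eq:3-2})\}$ is the real content of the theorem; your proposal does not address it.

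Case (i) does survive your approach, essentially by accident: with $w_0=w_1=w_4=1$ the identifications $a_{1,2}=a_{2,4}=a_{0,2}$, $a_{1,3}=a_{3,4}=a_{0,3}$ leave only three unknowns $a_{0,2},a_{0,3},a_{2,3}$, the $k=3,4$ equations (which coincide) determine $a_{0,2}$, and the $k=1,2$ equations form a $2\times2$ system in $(a_{0,3}-2,\,a_{2,3}-2)$ with determinant $q^m(q-1)^3(q^{2m-2}-1)\neq0$, forcing the unique solution $a_{0,3}=2$, $a_{2,3}=a_{0,2}$ — though note this is the opposite division of labour from the one you describe. Your unimodularity arguments (the conjugate-pair observation in (i), and the identity $a_{0,1}+2=4(q^{2m-1}-1)^2/\bigl((q^{2m-1}+q-2)q^{2m-1}\bigr)$ giving $-2<a_{0,1}<2$ in (ii)) are correct and, combined with the last assertion of Lemma~\ref{thm:1}, handle the converse direction; but the forward direction of (ii) has a genuine hole. (Minor point: the printed entry $P_{2,3}$ has a sign error — the row sum forces $P_{2,3}=(q-1)(q^{m-1}-1)$, as in the Magma code — so any hand computation with the displayed matrix would go astray.)
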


\begin{thm} \label{thm:main2}
Let $W$ be a complex Hadamard matrix given in {\rm(i)} and {\rm(ii)} of Theorem~{\rm\ref{thm:main}}.
The algebra $N(W)$ coincides with the linear span of $I$ and $J$.
In particular, $W$ is not equivalent to a nontrivial generalized
tensor product.
\end{thm}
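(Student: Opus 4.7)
The plan is to show that $N(W)=\mathbb{C}I+\mathbb{C}J$; the ``in particular'' clause then follows from the standard fact that a type-II matrix is equivalent to a nontrivial generalized tensor product precisely when its Nomura algebra strictly contains the span of $I$ and $J$. The inclusion $\mathbb{C}I+\mathbb{C}J\subseteq N(W)$ is automatic: $I$ fixes every vector, and the type-II identity $W(W^{(-)})^\top=nI$ is equivalent to $\sum_x(Y_{ab})_x=n\delta_{ab}$, which gives $JY_{aa}=nY_{aa}$ and $JY_{ab}=0$ for $a\neq b$.

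The key observation for the reverse inclusion is that under the hypotheses of Theorem~\ref{thm:main} the matrix $W$ has very few distinct entry values. In case (i), the conditions $w_1=w_3=w_4=1$ give $W=J+(w_2-1)A_2$, so $W$ takes values in $\{1,w_2\}$; in case (ii), $w_3=w_4=1$ gives $W=(A_0+A_3+A_4)+w_1A_1+w_2A_2$ with entries in $\{1,w_1,w_2\}$. Consequently each vector $Y_{ab}$ takes only two or three distinct values, and those values are constant on the blocks of the partition of $X$ determined by the $R_2$- or $(R_1,R_2)$-neighbourhoods of $\{a,b\}$.

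I would then take $M\in N(W)$ with $MY_{ab}=\lambda_{ab}Y_{ab}$ for all $a,b$, and eliminate the unknown $\lambda_{ab}$ by comparing two coordinates of the eigenvalue equation. This produces, for each $(a,b)\in R_i$, a linear identity on the block sums of the entries of $M$, with coefficients depending on the intersection numbers $p^i_{jk}$ of $\mathfrak{X}$ (computable from~\eqref{P}). Running this analysis over the classes $i=2,3,4$ and combining the resulting constraints should force $M_{xy}$ to depend only on whether $x=y$, giving $M\in\mathbb{C}I+\mathbb{C}J$.

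The main obstacle is case (ii), where $W$ has three distinct entry values rather than two and $w_1$ varies in a one-parameter family constrained by~\eqref{eq:1024}. There the bookkeeping of blocks and intersection numbers becomes significantly heavier, and one must verify that the linear constraints, together with the explicit formula~\eqref{eq:3-2} for $w_2$ in terms of $w_1$, really do rule out every matrix outside $\mathbb{C}I+\mathbb{C}J$. A natural first step is to exploit the disconnected relation $R_4$, whose connected components give a clean block decomposition of $X$, to reduce $M$ to block form, and then to analyse each block using the strongly regular structure carried by $R_2$.
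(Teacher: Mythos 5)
There is a genuine gap: your proposal stops exactly where the difficulty begins. The sentence ``running this analysis over the classes $i=2,3,4$ and combining the resulting constraints \emph{should force} $M_{xy}$ to depend only on whether $x=y$'' is the entire content of the theorem, and nothing in the proposal makes it plausible that the raw linear system obtained by comparing coordinates of $MY_{ab}=\lambda_{ab}Y_{ab}$ is tractable, let alone that it has only the trivial solutions. The paper does not attack that system directly. It uses the structural fact (from Jaeger--Matsumoto--Nomura) that $N(W)$ is itself a Bose--Mesner algebra, first proves that $N(W)$ is \emph{symmetric} (Lemma~\ref{lem:NS}, via the criterion that non-symmetry would force $\sum_{j,k}p_{jk}^i w_j^2/w_k^2=0$ for some $i$, ruled out by an explicit polynomial computation in $q$) --- a step entirely absent from your plan --- and then identifies the adjacency matrices of $N(W)$ with the connected components of the Jones graph on $X^2$. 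The theorem thereby reduces to a connectivity statement, which is checked by showing that specific inner products $\langle Y_{ab},Y_{cd}\rangle$ are nonzero; these are expressible through intersection numbers and hence reduce to finitely many polynomial non-vanishing statements in $q$, independent of $|X|$. Your coordinate-comparison approach has no analogous reduction to a finite, $|X|$-independent computation.

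Two further points. First, even within the paper's framework the hardest case is the one your plan treats as a convenience: triangles inside an $R_4$-class. The triple intersection numbers $c_{i,j,k}$ occurring there are \emph{not} determined by the scheme, so the paper has to work with the underdetermined system \eqref{cijk} (rank $7$, one degree of freedom) together with \eqref{eq:1026} and show the combined conditions are contradictory for all integers $q\ge4$; proposing to ``exploit the disconnected relation $R_4$'' without confronting this indeterminacy misses a real obstacle. Second, your opening claim that a type-II matrix is equivalent to a nontrivial generalized tensor product \emph{precisely when} $N(W)$ strictly contains $\C I+\C J$ is an overstatement: what Hosoya--Suzuki give, and what the paper uses, is only that a nontrivial generalized tensor product has an \emph{imprimitive} Nomura algebra, which cannot happen when $\dim N(W)=2$. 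Only that one implication is needed, but the biconditional as stated is not justified.
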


The reason for the assumption (\ref{assump}) is as follows: 
Calculating the conditions under which the matrix (\ref{W}) becomes 
a complex Hadamard matrix experimentally for small $q$ and $m$,
we find that (\ref{assump}) is fulfilled, or
\begin{enumerate}
\setcounter{enumi}{2}
\item $a_{0,4}=2(q^{2m}-6)/(q^{2m}-4)$, or
\item $a_{0,4}$ is a zero of a polynomial of degree $9$.
\end{enumerate}

For the case (iii) with $m=2,3$, we have $w_1=w_3=w_4$. Therefore, this case reduces to the case in which
the matrix $W$ given in (\ref{W}) belongs to the Bose--Mesner algebra 
of the strongly regular graph defined by $R_2$.
However, it seems to be difficult to prove $w_1=w_3=w_4$ for arbitrary $m\geq 4$.

For the case (iv), we verified that the polynomial in $a_{0,4}$ of degree $9$ is an irreducible polynomial 
for $m=2,\ldots,9$ and $q=2^s$ with $2\leq s\leq 10000$.
However, it seems difficult to determine the polynomial of degree $9$ satisfied by $a_{0,4}$ in general.
For example, 
for $(q,m)=(4,2)$,
if the matrix (\ref{W}) is a complex Hadamard matrix,
then $a_{0,4}$ is a zero of the polynomial
\begin{align*}
p(x)=&x^9-\frac{235721}{1785}x^8-\frac{17957726593}{62475}x^7+\frac{33219815829811}{937125}x^6 \\
&-\frac{12554318926285933}{4685625}x^5+\frac{29740292638491103}{312375}x^4 \\
&-\frac{696525696876795217}{187425}x^3+\frac{851886544261448041}{37485}x^2 \\
&-\frac{124583919439776136}{2499}x+\frac{30888835313436500}{833}.
\end{align*}
It can be shown that $p(x)$ has only one real root in $(-2,2)$ by using Sturm's theorem.
Then, by using Lemmas~\ref{thm:1} and \ref{thm:2} below,
there exist $w_1,w_2,w_3,w_4$ such that (\ref{W}) is a complex Hadamard matrix.

Under the hypothesis of (\ref{assump}), we find that $a_{0,1}=2$ or $a_{0,1}$ is given by (\ref{eq:1024}), or
\begin{enumerate}
\setcounter{enumi}{4}
\item $a_{0,1}$ is a zero of a polynomial of degree $4$.
\end{enumerate}
It seems to be difficult to determine $w_1,w_2,w_3$ for the case (vi) for arbitrary $q$ and $m$.
For example, for $(q,m)=(4,2)$,
if the matrix (\ref{W}) is a complex Hadamard matrix,
then $w_1,w_2,w_3$ are given by the following:
\begin{align*}
 & w_1^2+\frac{21s-7140\pm 85t}{176}w_1+1=0, \\
 & w_2=-\frac{64(w_1^2-1)}{127w_1+64a_{0,2}}, \\
 & w_3=\frac{90(w_1^2-1)}{90a_{1,3}w_1-4s+1117}, \\
 & a_{0,2}=\frac{43s-14620\pm 85t}{352}, \\
 & a_{1,3}=\frac{21s-1848\mp(4s+1253)t}{2640}, \\
 & s=\sqrt{104899}, \\
 & t^2=\frac{8s-2591}{3}.
\end{align*}




\section{Preliminaries}\label{sec:2}
We define a polynomial in three indeterminates $X,Y,Z$ as follows:
\[
g(X,Y,Z)=X^2+Y^2+Z^2-XYZ-4.
\]
We define a polynomial in six indeterminates 
$X_{0,1},X_{0,2},X_{0,3},X_{1,2},X_{1,3},X_{2,3}$ as follows:
\[
h(X_{0,1},X_{0,2},X_{0,3},X_{1,2},X_{1,3},X_{2,3})
=\det\begin{bmatrix}
2&X_{0,1}&X_{0,2}\\
X_{0,1}&2&X_{1,2}\\
X_{0,3}&X_{1,3}&X_{2,3}
\end{bmatrix}.
\]
For a finite set $N$ and a positive integer $k$, 
we denote by $\binom{N}{k}$ the collection of all $k$-element subsets
of $N$.

\begin{lem}[{\cite[Lemma 4]{MI}}] \label{thm:1}
Let $N=\{0,1,\dots,d\}$, 
$N_3=\binom{N}{3}$ and
$N_4=\binom{N}{4}$.
Let $a_{i,j}$ $(0\leq i,j\leq d,\;i\neq j)$ be
complex numbers satisfying
\begin{align}
a_{i,j}&=a_{j,i}\quad(0\leq i<j\leq d),\label{aij1}\\
g(a_{i,j},a_{j,k},a_{i,k})&=0\quad(\{i,j,k\}\in N_3),\label{N1}\\
h(a_{i,j},a_{i,k},a_{i,\ell},a_{j,k},a_{j,\ell},a_{k,\ell})&=0
\quad(\{i,j,k,\ell\}\in N_4).\label{N4}
\end{align}
Assume 
\begin{equation}\label{a1}
a_{i_0,i_1}\neq\pm2\quad\text{for some $i_0,i_1$ with
$0\leq i_0<i_1\leq d$.}
\end{equation}
Let $w_{i_0}$, $w_{i_1}$ be nonzero complex numbers satisfying
\begin{equation}\label{az1d}
\frac{w_{i_0}}{w_{i_1}}+\frac{w_{i_1}}{w_{i_0}}=a_{{i_0,i_1}}.
\end{equation}
Then for
complex numbers $w_i$ $(0\leq i\leq d,\;i\neq i_0,i_1)$,
the following are equivalent:
\begin{enumerate}
\item for all $i,j$ with $0\leq i,j\leq d$ and $i\neq j$,
\begin{equation}\label{az2}
\frac{w_j}{w_i}+\frac{w_i}{w_j}=a_{i,j}
\end{equation}
\item
for all $i,j$ with $0\leq i\leq d$, $i\neq i_0,i_1$,
\begin{equation}\label{azid}
w_i=
\frac{w_{i_1}^2-w_{i_0}^2}{a_{{i_1},i}w_{i_1}-a_{{i_0},i}w_{i_0}}.
\end{equation}
\end{enumerate}
Moreover, if one of the two equivalent conditions
{\rm(i), (ii)} is satisfied,
$a_{i,j}$ $(0\leq i<j\leq d)$ are all real and
\begin{equation}\label{-22}
-2<a_{i_0,i_1}<2, 
\end{equation}
then $|w_i|=|w_j|$ for $0\leq i<j\leq d$.
\end{lem}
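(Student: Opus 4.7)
The plan is to prove the equivalence (i) $\iff$ (ii) by arguing each direction separately, then deduce the metric claim at the end. The key reformulation is that the pairwise identity $w_i/w_j + w_j/w_i = a_{i,j}$ is equivalent to the quadratic relation $w_i^2 - a_{i,j} w_i w_j + w_j^2 = 0$; I would use this form throughout and write $u = w_{i_0}$, $v = w_{i_1}$, so that (\ref{az1d}) gives $v^2 + u^2 = a_{i_0, i_1} uv$.

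For \emph{(i) $\Rightarrow$ (ii)}, fix $i \notin \{i_0, i_1\}$ and subtract the two quadratic identities from (i) associated with the pairs $\{i_0, i\}$ and $\{i_1, i\}$; this eliminates $w_i^2$ and yields $(a_{i_1, i} v - a_{i_0, i} u)\, w_i = v^2 - u^2$, namely (\ref{azid}). It remains to check that the coefficient $a_{i_1, i} v - a_{i_0, i} u$ is nonzero; otherwise the same subtraction would force $v^2 = u^2$, hence $v = \pm u$ and $a_{i_0, i_1} = \pm 2$, violating (\ref{a1}).

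The substantive direction is \emph{(ii) $\Rightarrow$ (i)}. I would split the pairs $\{i,j\}$ into three cases. The pair $\{i_0, i_1\}$ itself is handled by (\ref{az1d}). For a pair $\{i_0, i\}$ with $i \notin \{i_0, i_1\}$ (and symmetrically for $\{i_1, i\}$), substituting (\ref{azid}) into the target quadratic reduces, after clearing denominators, to divisibility of a certain quartic in $t = v/u$ by the factor $t^2 - a_{i_0, i_1} t + 1$; the remainder should vanish iff the triple identity $g(a_{i_0, i_1}, a_{i_0, i}, a_{i_1, i}) = 0$ from (\ref{N1}) holds. The main obstacle is the last case, $i, j \notin \{i_0, i_1\}$: setting $D_k = a_{i_1, k} v - a_{i_0, k} u$ so that $w_k = (v^2 - u^2)/D_k$, the desired identity rearranges to $D_i^2 + D_j^2 - a_{i,j} D_i D_j = 0$. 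Expanding this as a quadratic form in $u, v$, the $v^2$ coefficient should vanish by $g(a_{i_1, i}, a_{i_1, j}, a_{i,j}) = 0$, the $u^2$ coefficient by $g(a_{i_0, i}, a_{i_0, j}, a_{i,j}) = 0$, and the residual $uv$ coefficient (after substituting $v^2 + u^2 = a_{i_0, i_1} uv$) should reduce, using $a_{i_0, i_1}^2 \neq 4$, precisely to the quadruple determinant identity (\ref{N4}) for $\{i_0, i_1, i, j\}$. The delicate bookkeeping here, to verify that the six arguments of $h$ line up with the expansion of the residual, is where the main effort will lie.

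For the metric claim I would write $v/u = r e^{i\theta}$ with $r > 0$: realness of $a_{i_0, i_1}$ gives $(r - r^{-1}) \sin\theta = 0$, and the alternative $\sin\theta = 0$ would imply $|a_{i_0, i_1}| = r + r^{-1} \geq 2$, contradicting (\ref{-22}); hence $r = 1$. For any other $i$, writing $w_i/u = s e^{i\phi}$, the realness of both $a_{i_0, i}$ and $a_{i_1, i}$ forces $(s - s^{-1}) \sin\phi = 0 = (s - s^{-1}) \sin(\phi - \theta)$; if $s \neq 1$ then both sines vanish, so $\sin\theta = 0$, once again contradicting (\ref{-22}). Therefore $|w_i| = |w_{i_0}| = |w_{i_1}|$ for all $i$.
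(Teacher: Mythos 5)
The paper does not prove this lemma; it is imported verbatim as \cite[Lemma~4]{MI}, so there is no in-paper argument to compare against. Your proposal is a correct self-contained proof, and its structure (subtract the two quadratics $w_i^2-a_{i_0,i}w_iw_{i_0}+w_{i_0}^2=0$ and $w_i^2-a_{i_1,i}w_iw_{i_1}+w_{i_1}^2=0$ to get the linear formula, then verify the three types of pairs, then the modulus argument) is the natural one and matches what the cited source does. I checked the key computation: for $i,j\notin\{i_0,i_1\}$ the target is indeed $D_i^2+D_j^2-a_{i,j}D_iD_j=0$ with $D_k=a_{i_1,k}v-a_{i_0,k}u$, and expanding in $u,v$ and using $u^2+v^2=a_{i_0,i_1}uv$ reduces it to $uv$ times exactly the determinant $h$ evaluated with the labeling $(i,j,i_0,i_1)$. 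Two small corrections. First, your phrasing is off at one point: the $v^2$ coefficient $a_{i_1,i}^2+a_{i_1,j}^2-a_{i,j}a_{i_1,i}a_{i_1,j}$ does not \emph{vanish} by $g(a_{i_1,i},a_{i_1,j},a_{i,j})=0$; it equals $4-a_{i,j}^2$ (likewise for the $u^2$ coefficient), and it is only after the substitution $u^2+v^2=a_{i_0,i_1}uv$ that everything collapses onto the $uv$ coefficient and yields $h=0$ — if those coefficients literally vanished there would be nothing left to substitute into. The reduction you intend does go through, so this is an imprecision rather than a gap. Second, since $h$ is not symmetric in its arguments, your argument needs (\ref{N4}) for the ordering $(i,j,i_0,i_1)$ of each $4$-subset, not just one canonical ordering; this is consistent with how the paper actually uses the condition (the Magma code imposes $h$ for all permutations), but you should say explicitly that you are invoking that labeling. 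The derivations of the $\{i_0,i\}$ case from $g(a_{i_0,i_1},a_{i_0,i},a_{i_1,i})=0$, the nonvanishing of the denominator via $a_{i_0,i_1}\neq\pm2$, and the final modulus argument (ruling out $\sin\theta=0$ using $-2<a_{i_0,i_1}<2$) are all correct.
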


We let $\mathcal{A}$ denote a
symmetric Bose--Mesner algebra with adjacency matrices
$A_0,A_1,\dots,A_d$. Let $n$ be the size of the matrices $A_i$,
and we denote by 
\[
P=(P_{i,j})_{\substack{0\leq i\leq d\\ 0\leq j\leq d}}
\]
the first eigenmatrix of $\mathcal{A}$. Then the 
adjacency matrices are expressed as
\[
A_j=\sum_{i=0}^d P_{i,j}E_i\quad(j=0,1,\dots,d),
\]
where $E_0=\frac{1}{n}J,E_1,\dots,E_d$ are the primitive idempotents
of $\mathcal{A}$.

Let $w_0,w_1,\dots,w_d$ be nonzero complex numbers, and
set
\begin{equation}\label{eq:W} 
W=\sum_{j=0}^dw_j A_j\in\mathcal{A}.
\end{equation}


\begin{lem}[{\cite[Lemma 7]{MI}}] \label{thm:2}
Let $X_{i,j}$ $(0\leq i<j\leq d)$ be indeterminates and
let $e_k$ be the polynomial defined by
\begin{equation} \label{eq:04}
e_k=
\sum_{0\leq i<j\leq d} P_{k,i}P_{k,j}X_{i,j}
+\sum_{i=0}^d P_{k,i}^2-n
\quad(k=1,\dots,d).
\end{equation}
Let $a_{i,j}$ $(0\leq i,j\leq d,\;i\neq j)$ and
$w_i$ $(0\leq i\leq d)$ be complex numbers.
Assume that $w_i\neq0$ for all $i$ with $0\leq i\leq d$
and that {\rm(\ref{az2})} holds.
Then the following statements are equivalent:
\begin{enumerate}
\item 
the matrix $W$ given by {\rm(\ref{eq:W})}
is a type-II matrix,
\item
$(a_{i,j})_{0\leq i<j\leq d}$
is a common zero of $e_k$ $(k=1,\dots,d)$.
\end{enumerate}
Moreover, if one of the two equivalent conditions
{\rm(i), (ii)} is satisfied, 
$a_{i,j}\in\R$ for all $i,j$ with $0\leq i<j\leq d$,
and {\rm(\ref{-22})} holds for some $i_0, i_1$ with
$0\leq i_0<i_1\leq d$, then 
$W$ is a scalar multiple of a complex
Hadamard matrix.
\end{lem}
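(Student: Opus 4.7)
The plan is to diagonalize the identity $WW^{(-)\top}=nI$ in the primitive-idempotent basis of $\mathcal{A}$ and recognize the resulting componentwise conditions as the vanishing of the polynomials $e_k$ at $(a_{i,j})$. The concluding ``moreover'' assertion will then reduce to the final clause of Lemma~\ref{thm:1}.

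First, I would write $W=\sum_{j=0}^d w_jA_j=\sum_{i=0}^d \hat w_i E_i$ with $\hat w_i=\sum_j w_j P_{i,j}$. Because the $A_j$ have disjoint $0$--$1$ supports partitioning $X\times X$ and $w_j\neq 0$, the entrywise inverse satisfies $W^{(-)}=\sum_j w_j^{-1}A_j\in\mathcal{A}$; since $\frak{X}$ is symmetric, $A_j^\top=A_j$ and
\[
W^{(-)\top}=\sum_{i=0}^d \check w_i E_i,\qquad \check w_i=\sum_j w_j^{-1}P_{i,j}.
\]
Using $E_iE_j=\delta_{ij}E_i$ and $nI=n\sum_i E_i$, the equation $WW^{(-)\top}=nI$ becomes $\hat w_i\check w_i=n$ for every $i=0,1,\dots,d$.

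Next, expanding the product $\hat w_i\check w_i$ and substituting (\ref{az2}) gives
\[
\hat w_i\check w_i=\sum_j P_{i,j}^2+\sum_{j<k}P_{i,j}P_{i,k}\,a_{j,k}=e_i(a)+n,
\]
so (i) is equivalent to $e_k(a)=0$ for \emph{all} $k=0,1,\dots,d$. To drop the $k=0$ equation I would invoke the orthogonality $\sum_k m_k P_{k,i}P_{k,j}=n k_i\delta_{ij}$ of the first eigenmatrix, which yields
\[
\sum_k m_k\,\hat w_k\check w_k=n\sum_i k_i=n^2=n\sum_k m_k,
\]
hence $\sum_k m_k e_k(a)=0$. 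Since $m_0=1\neq 0$, the vanishing of $e_1,\dots,e_d$ at $a$ forces $e_0(a)=0$, completing the equivalence (i)$\Longleftrightarrow$(ii).

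For the ``moreover'' conclusion, the extra hypotheses (realness of the $a_{i,j}$ and (\ref{-22})) are precisely those of the final clause of Lemma~\ref{thm:1}, which delivers $|w_0|=|w_1|=\cdots=|w_d|=c$ for some $c>0$. Every entry of $W$ then has modulus $c$, so $\overline{W_{x,y}}=c^2 W_{x,y}^{-1}$ and $WW^*=c^2 WW^{(-)\top}=c^2 nI$; therefore $W/c$ is a complex Hadamard matrix. The only nonroutine step is recognizing that the $k=0$ condition is redundant: without the orthogonality identity for $P$, one would be left with a spurious extra equation in (ii).
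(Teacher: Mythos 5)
Your proof is correct and is essentially the standard argument for this lemma (which the paper only cites from \cite{MI} rather than proving): diagonalizing $WW^{(-)\top}=nI$ over the primitive idempotents to get $\hat w_k\check w_k=n$, identifying $\hat w_k\check w_k-n$ with $e_k(a)$ via (\ref{az2}), and disposing of the redundant $k=0$ equation by the orthogonality relation $\sum_k m_kP_{k,i}P_{k,j}=nk_i\delta_{ij}$. Your appeal to the final clause of Lemma~\ref{thm:1} for the ``moreover'' part is legitimate, since (\ref{az2}) together with $-2<a_{i_0,i_1}<2$ guarantees all hypotheses of that lemma (its conditions (\ref{N1}), (\ref{N4}), (\ref{a1}) follow automatically from (\ref{az2})), and the passage from $|w_0|=\cdots=|w_d|=c$ to $WW^\ast=c^2nI$ is carried out correctly.
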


\bigskip

We now describe the proof of Theorem~\ref{thm:main} briefly.
Let $A_0,A_1,A_2,A_3,A_4$ be the adjacency matrices of an association scheme $\frak{X}$
with the first eigenmatrix (\ref{P}).
Let $w_0=1,w_1,w_2,w_3,w_4$ be nonzero complex numbers, and
$W$ be the matrix defined by (\ref{W}).
For $i,j\in\{0,1,2,3,4\}$, define $a_{i,j}$ by (\ref{aij0802}).
We write 
\begin{equation} \label{aa}
\boldsymbol{a}=(a_{0,1},a_{0,2},a_{0,3},a_{0,4},a_{1,2},a_{1,3},a_{1,4},a_{2,3},a_{2,4},a_{3,4})
\end{equation}
for brevity.
Consider the polynomial ring 
\begin{equation} \label{RC}
R=\mathbb{C}[X_{0,1},X_{0,2},X_{0,3},X_{0,4},X_{1,2},X_{1,3},X_{1,4},X_{2,3},X_{2,4},X_{3,4}].
\end{equation}

In Section 3, we first assume that $W$ is a complex Hadamard matrix. 
Then by Lemmas~\ref{thm:1} and \ref{thm:2},
$\boldsymbol{a}$ is a common zero of the polynomials
\begin{align}
& g(X_{i,j},X_{i,k},X_{j,k})\quad(\{i,j,k\}\in\binom{\{0,1,2,3,4\}}{3}),\label{gg} \\
& h(X_{i,j},X_{i,k},X_{i,l},X_{j,k},X_{j,l},X_{k,l})
\quad(\{i,j,k,l\}\in\binom{\{0,1,2,3,4\}}{4}), \label{hh} \\
& e_k\quad (k\in\{1,2,3,4\}). \label{ee}
\end{align}
Let $\mathcal{I}$ be the ideal of $R$ generated by these polynomials.
Calculating the ideal generated by $\mathcal{I}$ and $X_{0,4}-2$,
we find (i) and (ii) of Theorem~\ref{thm:main}.

Conversely, we assume that $w_4=1$ and
$w_1,w_2,w_3$ are given in Theorem~\ref{thm:main}.
Then, to show that the matrix $W$ given in (\ref{W}) is a complex Hadamard matrix,
we check that $\boldsymbol{a}$ defined by (\ref{aij0802}), (\ref{aa})
is a zero of the polynomials (\ref{gg}), (\ref{hh}), (\ref{ee}).
Moreover, we check that $-2<a_{i_0,i_1}<2$ holds
for some $i_0,i_1$ with $0\leq i_0<i_1\leq 4$.

All the computer calculations in this paper were performed with the help of
Magma \cite{magma}.
In order to facilitate computations covering all possible values of
the integer $q$, we perform the computations in
the polynomial ring with $12$ variables
$q,r=q^m$ and $X_{i,j}$ over the field of rational numbers,
rather than the ring \eqref{RC}.
The results valid for this generic setting are also valid for
arbitrary integers $q$ and $m$.

\section{Proof of Theorem~\ref{thm:main}}\label{sec:3}
Recall $q\geq 4$ and $m\geq 2$, and
$\mathcal{I}$ is the ideal of the polynomial ring $R$ generated by 
the polynomials (\ref{gg}), (\ref{hh}), and (\ref{ee}).
For the remainder of this section, we assume that $a_{0,4}=2$, that is, $w_4=1$.
Let $\mathcal{I}_1$ denote the ideal generated by $\mathcal{I}$ and $X_{0,4}-2$.
For Lemmas~\ref{lem:a01}--\ref{lem:a02d} we assume that $\boldsymbol{a}$ defined in (\ref{aa}) 
is a common zero of the polynomials in $\mathcal{I}_1$.

\begin{lem} \label{lem:a01}
We have
\begin{equation} \label{a12}
a_{1,2}=-\frac{2(q^{2m}-2)}{q^{2m}}.
\end{equation}
\end{lem}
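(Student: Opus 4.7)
The plan is to isolate a small sub-collection of generators of $\mathcal{I}_1$ whose vanishing on $\boldsymbol{a}$ already forces (\ref{a12}), rather than running a full Gr\"obner basis computation. The key observation is that once $a_{0,4}=2$ is exploited, the polynomial $e_3$ from Lemma~\ref{thm:2} couples $X_{1,2}$ to nothing else.

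First I would exploit the three cubic relations $g(X_{0,i},X_{0,4},X_{i,4})$ for $i=1,2,3$. Substituting $X_{0,4}=2$ turns each of them into $(X_{0,i}-X_{i,4})^2$, so vanishing at $\boldsymbol{a}$ gives $a_{i,4}=a_{0,i}$ for $i=1,2,3$.

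Next I would write out $e_3$ from the third row $(P_{3,0},\dots,P_{3,4})=(1,\tfrac{1}{2}q^m,-\tfrac{1}{2}q^m,0,-1)$ and $n=q^{2m}-1$. Because $P_{3,3}=0$, every term containing $X_{i,3}$ drops, and only the six variables $X_{0,1},X_{0,2},X_{0,4},X_{1,2},X_{1,4},X_{2,4}$ survive. After substituting $X_{0,4}=2$ and $X_{i,4}=X_{0,i}$ for $i=1,2$, the $X_{0,1}$- and $X_{0,2}$-contributions cancel in pairs (in each pair the two coefficients are $\pm\tfrac{1}{2}q^m$ of opposite sign), and the constant terms collapse, leaving
\[
-\tfrac{1}{4}q^{2m}a_{1,2}+1-\tfrac{1}{2}q^{2m}=0,
\]
which rearranges at once to (\ref{a12}).

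The main obstacle is just spotting this clean reduction: the antisymmetric pattern $P_{3,1}=-P_{3,2}$, $P_{3,4}=-P_{3,0}$, together with $P_{3,3}=0$, makes $e_3$ effectively a linear equation in $X_{1,2}$ modulo the three $g$-relations. Once observed, the verification is one line of arithmetic, and it can equally be confirmed by a Magma computation inside $\mathcal{I}_1$.
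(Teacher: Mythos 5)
Your proof is correct, and it takes a genuinely different route from the paper's. The paper proves this lemma purely by a Magma ideal-membership test: it verifies that $q^{2m}X_{1,2}+2(q^{2m}-2)$ lies in $\mathcal{I}_1$, with no indication of which generators are responsible. You instead exhibit an explicit, computer-free certificate using only four generators: $X_{0,4}-2$, the two relations $g(X_{0,i},X_{0,4},X_{i,4})=(X_{0,i}-X_{i,4})^2$ for $i=1,2$ (which force $a_{i,4}=a_{0,i}$; note $i=3$ is not actually needed since $P_{3,3}=0$ kills every $X_{j,3}$ term), and $e_3$. I checked the arithmetic: with $(P_{3,0},\dots,P_{3,4})=(1,\tfrac12 q^m,-\tfrac12 q^m,0,-1)$ one gets $\sum_i P_{3,i}^2-n=3-\tfrac12 q^{2m}$, the pairs $(0,1),(1,4)$ and $(0,2),(2,4)$ cancel after the substitutions, and what remains is exactly $-\tfrac14 q^{2m}a_{1,2}+1-\tfrac12 q^{2m}=0$, i.e.\ $a_{1,2}=-2(q^{2m}-2)/q^{2m}$. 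Since $\boldsymbol{a}$ is assumed to be a common zero of all generators of $\mathcal{I}_1$, restricting attention to a sub-collection is legitimate. What your approach buys is transparency and independence from a Gr\"obner basis computation (which, per the appendix, is nontrivial for the later lemmas); what the paper's approach buys is uniformity, since the same mechanical ideal computation handles the harder Lemmas~\ref{lem:a02eq2} and \ref{lem:a02d} where no such short hand derivation is apparent.
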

\begin{proof}
We can verify that $\mathcal{I}_1$ contains $X_{1,2}+2(q^{2m}-2)/q^{2m}$. Hence we have (\ref{a12}).
\end{proof}

\begin{lem}  \label{lem:a02eq2}
Assume $a_{0,1}=2$.
Then, $(w_1,w_2,w_3)$ is given in {\rm(i)} of Theorem~{\rm \ref{thm:main}}.
\end{lem}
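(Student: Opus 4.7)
The plan is to exploit the hypothesis $a_{0,1}=2$ by passing to the enlarged ideal $\mathcal{I}_2:=\mathcal{I}_1+(X_{0,1}-2)$, and to extract from it both the value of $w_3$ and the quadratic relation on $w_2$.

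First, I would observe that $a_{0,1}=2$ combined with $w_0=1$ gives $w_1+1/w_1=2$, which forces $w_1=1$; this matches the hypothesis of part~(i) of Theorem~\ref{thm:main}. The next task is to deduce $w_3=1$, and for this I plan to verify via a Magma-assisted ideal membership computation that $X_{0,3}-2\in\mathcal{I}_2$, working in the polynomial ring over $\mathbb{Q}$ with the twelve indeterminates $q,r,X_{0,1},\dots,X_{3,4}$ (with $r$ playing the role of $q^m$) as set up at the end of Section~2. Once this membership is established, $a_{0,3}=2$ together with $w_0=1$ gives $(w_3-1)^2=0$, hence $w_3=1$.

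The quadratic on $w_2$ is then immediate from Lemma~\ref{lem:a01}. Substituting $w_1=1$ into $a_{1,2}=w_1/w_2+w_2/w_1$ yields $w_2+1/w_2=-2(q^{2m}-2)/q^{2m}$, which rearranges to
\[
w_2^2+\frac{2(q^{2m}-2)}{q^{2m}}w_2+1=0,
\]
as required. This completes the argument modulo the single computer-algebra step.

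I expect the main obstacle to be the ideal membership check $X_{0,3}-2\in\mathcal{I}_2$. The ideal $\mathcal{I}_2$ has twenty-one generators (ten of type $g$, five of type $h$, four polynomials $e_k$, plus $X_{0,4}-2$ and $X_{0,1}-2$) in twelve variables with parametric dependence on $q$ and $r$, so the Gr\"obner basis computation is substantial but well within Magma's capabilities, and by the remark at the end of Section~2 the result of this generic computation is valid for all $q\geq 4$ and $m\geq 2$; no auxiliary case analysis should be needed.
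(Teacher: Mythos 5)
Your proposal is essentially the paper's own argument: both pass to $\mathcal{I}_2=\mathcal{I}_1+(X_{0,1}-2)$ and settle $w_3$ by a Magma ideal-membership check in the generic ring with $q$ and $r=q^m$ as extra variables. Two remarks. First, a small technical caveat: the paper does not verify $X_{0,3}-2\in\mathcal{I}_2$ but rather $(q^{2m}-2)$-free multiple $(r^2-1)^2(X_{0,3}-2)^2\in\mathcal{I}_2$; since $\mathcal{I}_2$ need not be radical, your literal membership test for the linear polynomial may fail in Magma, so you should test the square (or a suitable nonzero multiple of it) — the conclusion $a_{0,3}=2$, hence $w_3=1$, is unaffected because $r^2-1=q^{2m}-1\neq 0$. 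Second, for the quadratic on $w_2$ you argue directly from Lemma~\ref{lem:a01}: with $w_1=1$ the identity $a_{1,2}=w_2+1/w_2=-2(q^{2m}-2)/q^{2m}$ immediately gives $w_2^2+\tfrac{2(q^{2m}-2)}{q^{2m}}w_2+1=0$. The paper instead observes that $w_1=w_3=w_4=1$ places $W$ in the Bose--Mesner algebra of the strongly regular graph defined by $R_2$ and cites Chan--Godsil for the condition on $w_2$. Your route is more self-contained and equally valid for this direction of the statement (the Chan--Godsil citation is doing extra work in the converse part of Theorem~\ref{thm:main}, which is not what this lemma asserts).
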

\begin{proof}
Let $\mathcal{I}_2$ denote the ideal generated by $\mathcal{I}_1$ and $X_{0,1}-2$.
Then we can verify that $\mathcal{I}_2$ contains $(X_{0,3}-2)^2$, that is, $a_{0,3}=2$.
Hence $w_1=w_3=1$.
Since $a_{1,2}$ is given in (\ref{a12}),
the matrix $W$ given in (\ref{W}) belongs to 
the Bose--Mesner algebra of the strongly regular graph defined by $R_2$.
From \cite{ChanGodsil} we have the condition of $w$ given in {\rm(i)} of Theorem~{\rm\ref{thm:main}}.
\end{proof}

\begin{lem}  \label{lem:a02d}
Assume that $a_{0,1}$ is given in {\rm(\ref{eq:1024})}.
Then, $(w_1,w_2,w_3)$ is given in {\rm(ii)} of Theorem~{\rm \ref{thm:main}}.
\end{lem}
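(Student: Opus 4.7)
The plan is to mirror the strategy of Lemma~\ref{lem:a02eq2} verbatim. I will let $\mathcal{I}_3$ denote the ideal generated by $\mathcal{I}_1$ together with the polynomial
\[
q^{2m-1}(q^{2m-1}+q-2)\,X_{0,1} - 2\bigl(q^{4m-2}-(q+2)q^{2m-1}+2\bigr),
\]
obtained from (\ref{eq:1024}) by clearing denominators. (As throughout the paper, the computation is performed in the $12$-variable polynomial ring over $\mathbb{Q}$ with indeterminates $q,r,X_{i,j}$, with $r$ playing the role of $q^m$.) Since $q \geq 4$ and $m \geq 2$, the leading factor $q^{2m-1}(q^{2m-1}+q-2)$ is nonzero, so $\mathcal{I}_3$ faithfully encodes the hypothesis on $a_{0,1}$.

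The first step is a Gr\"obner basis computation in Magma from which I expect to extract two concrete consequences: (a) a member of $\mathcal{I}_3$ of the form $(X_{0,3}-2)^2\,p(q,r)$ with $p(q,r)$ visibly nonzero for $q\ge 4, m\ge 2$, and (b) a linear polynomial $\alpha(q,r)\,X_{0,2} - \beta(q,r) \in \mathcal{I}_3$ with $\alpha(q,r)$ nonzero on the same range, yielding a closed-form expression for $a_{0,2}$ as a rational function of $q$ and $r$. From (a), together with $w_0=1$, the identity $w_3+w_3^{-1}=2$ forces $(w_3-1)^2=0$, so $w_3=1$.

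The second step is to invoke Lemma~\ref{thm:1}(ii) with $i_0=0$, $i_1=1$. This is legitimate because the value of $a_{0,1}$ prescribed by (\ref{eq:1024}) satisfies $a_{0,1}\ne\pm 2$ for every $q\ge 4$ and $m\ge 2$ (a direct check: $a_{0,1}=\pm 2$ reduces to $q^{2m}=1$ or a similar contradiction). The lemma then gives
\[
w_2 = \frac{w_1^2-1}{a_{1,2}w_1 - a_{0,2}}.
\]
Substituting $a_{1,2}=-2(q^{2m}-2)/q^{2m}$ from Lemma~\ref{lem:a01} and the explicit formula for $a_{0,2}$ obtained in (b), and then replacing $w_1^2-1$ by $a_{0,1}w_1-2$ via the quadratic $w_1^2 - a_{0,1}w_1 + 1 = 0$, the right-hand side should collapse (after cancellation of a common linear factor in $w_1$) to the linear expression (\ref{eq:3-2}).

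The main obstacle I anticipate is the Gr\"obner basis computation itself: working generically over $\mathbb{Q}[q,r]$ rather than at specific integer values can cause substantial coefficient growth, and one may need to saturate with respect to auxiliary polynomials in $q,r$ that do not vanish on the parameter range. A secondary difficulty is that the Gr\"obner basis may only yield a higher-degree relation satisfied by $X_{0,2}$ rather than a linear one, in which case picking out the correct root as the true value of $a_{0,2}$ requires extra care; and the final rational-function simplification leading from Lemma~\ref{thm:1}(ii) to the precise shape (\ref{eq:3-2}) must be carried out with the quadratic relation on $w_1$ explicitly in hand so that common factors cancel cleanly.
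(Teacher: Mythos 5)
Your proposal is correct and follows essentially the same route as the paper: adjoin the cleared-denominator form of $X_{0,1}-a_{0,1}$ to $\mathcal{I}_1$, extract a linear relation determining $a_{0,2}$ and the relation $a_{0,3}=2$, then apply Lemma~\ref{thm:1}(ii) with $(i_0,i_1)=(0,1)$ and eliminate $w_1^2$ via $w_1^2-a_{0,1}w_1+1=0$ to reach (\ref{eq:3-2}). The only (harmless) discrepancy is one you yourself anticipated: in the paper the linear polynomial in $X_{0,2}$ lies in $\mathcal{I}_3$ only after multiplication by a cofactor that is nonvanishing on the parameter range, and $(X_{0,3}-2)^2$ (times $r^2-1$) is certified to lie not in $\mathcal{I}_3$ but in the larger ideal $\mathcal{I}_4$ obtained by adjoining that linear polynomial.
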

\begin{proof}
Let $\mathcal{I}_3$ denote the ideal generated by $\mathcal{I}_1$ and $X_{0,1}-a_{0,1}$.
Then we can verify that $\mathcal{I}_3$ contains 
$q(q^{2m-1}+q-2)X_{0,2}+2(q^{2m}-q^2+2q-2)$, that is,
\begin{equation} \label{a02inLem7}
a_{0,2}=-\frac{2(q^{2m}-q^2+2q-2)}{q(q^{2m-1}+q-2)}
\end{equation}
Let $\mathcal{I}_4$ denote the ideal generated by $\mathcal{I}_3$ and $p_1(X_{0,2})$.
Then we can verify that $\mathcal{I}_4$ contains $X_{0,3}-2$, that is, $w_3=1$.
From (\ref{azid}) we obtain
\[
w_2=\frac{w_1^2-1}{a_{1,2}w_1-a_{0,2}}.
\]
Since $w_1^2-a_{0,1}w_1+1=0$,
we have (\ref{eq:3-2}) from (\ref{a12}), (\ref{a02inLem7}).
\end{proof}

\begin{proof}[Proof of Theorem~{\rm\ref{thm:main}}]
Suppose that the matrix (\ref{W}) is a complex Hadamard matrix.
For $i,j\in\{0,1,2,3,4\}$, define $a_{i,j}$ by (\ref{aij0802}).
Let $\boldsymbol{a}$ be given in (\ref{aa}).
Then $\boldsymbol{a}$ is a common zero of the polynomials in $\mathcal{I}_1$ by Lemma~\ref{thm:2}.
From Lemmas~\ref{lem:a02eq2}, \ref{lem:a02d}
we have (i) and (ii) of Theorem~\ref{thm:main}.

Conversely, assume that $w_1,w_2,w_3$, and $w_4$ are given in Theorem~\ref{thm:main}.
Then, we show that the matrix given in (\ref{W}) is a complex Hadamard matrix.
To do this, 
we check that $\boldsymbol{a}$ defined by (\ref{aij0802}) is 
a zero of the polynomials (\ref{gg}), (\ref{hh}), and (\ref{ee}),
and $(w_1,w_2,w_3)$ are complex numbers of absolute value $1$.
The latter condition is satisfied if $-2<a_{i_0,i_1}<2$ holds for some $i_0,i_1$ with $0\leq i_0<i_1\leq 4$.

Case (i) is done by \cite{ChanGodsil}.


Next consider Case (ii).
From (\ref{aij0802}), (\ref{eq:1024}), and (\ref{eq:3-2}) we have (\ref{a12}) and (\ref{a02inLem7}).
Then we have
\[
\boldsymbol{a}=\left( a_{0,1},a_{0,2},2,2,a_{1,2},a_{0,1},a_{0,1},a_{0,2},a_{0,2},2\right).
\]
This is a zero of the polynomials (\ref{gg}), (\ref{hh}), and  (\ref{ee}).
It is easy to check that $0<a_{0,1}<2$.
\end{proof}

\section{Proof of Theorem~\ref{thm:main2}}\label{sec:4}
Since $q^{2m}-1$ is a composite, there are uncountably many
inequivalent complex Hadamard matrices of order $q^{2m}-1$
by \cite{Craigen}. Indeed, such matrices can be constructed
using generalized tensor products \cite{HS}. 
We show that 
 none of our complex Hadamard matrices is equivalent to a nontrivial generalized tensor product.
This is done by showing that the Nomura algebra of our
complex Hadamard matrices has dimension $2$. 
According to
\cite{HS}, the Nomura algebra of a nontrivial generalized tensor product
of type-II matrices is imprimitive, and this is never the case
when it has dimension $2$.

Recall $q\geq 4$ and $m\geq 2$.
The intersection matrices $B_i=(p_{ij}^k)$ ($i=0,\ldots,4$) of $\frak{X}$ are given by the following:
\begin{align*}
B_0&=\begin{bmatrix}
1&0&0&0&0 \\
0&1&0&0&0 \\
0&0&1&0&0 \\
0&0&0&1&0 \\
0&0&0&0&1
\end{bmatrix}, \\
B_1&=\begin{bmatrix}
0&1&0&0&0 \\
\frac{(q-2)q^{2m-1}}{2}&\frac{(q-2)^2q^{2m-2}}{4}&\frac{(q-2)^2q^{2m-2}}{4}&\frac{(q-2)^2q^{2m-2}}{4}&\frac{(q-4)q^{2m-1}}{4} \\
0&\frac{(q-2)q^{2m-1}}{4}&\frac{(q-2)q^{2m-1}}{4}&\frac{(q-2)q^{2m-1}}{4}&\frac{q^{2m}}{4} \\
0&\frac{(q-2)(q^{2m-2}-1)}{2}&\frac{(q-2)(q^{2m-2}-1)}{2}&\frac{(q-2)q^{2m-2}}{2}&0 \\
0&\frac{q-4}{2}&\frac{q-4}{2}&0&0 
\end{bmatrix}, \\
B_2&=\begin{bmatrix}
0&0&1&0&0 \\
0&\frac{(q-2)q^{2m-1}}{4}&\frac{(q-2)q^{2m-1}}{4}&\frac{(q-2)q^{2m-1}}{4}&\frac{q^{2m}}{4} \\
\frac{q^{2m}}{2}&\frac{q^{2m}}{4}&\frac{q^{2m}}{4}&\frac{q^{2m}}{4}&\frac{q^{2m}}{4} \\
0&\frac{q(q^{2m-2}-1)}{2}&\frac{q(q^{2m-2}-1)}{2}&\frac{q^{2m-1}}{2}&0 \\
0&\frac{q}{2}&\frac{q-2}{2}&0&0
\end{bmatrix}, \\
B_3&=\begin{bmatrix}
0&0&0&1&0 \\
0&\frac{(q-2)(q^{2m-2}-1)}{2}&\frac{(q-2)(q^{2m-2}-1)}{2}&\frac{(q-2)q^{2m-2}}{2}&0\\
0&\frac{q(q^{2m-2}-1)}{2}&\frac{q(q^{2m-2}-1)}{2}&\frac{q^{2m-1}}{2}&0 \\
q(q^{2m-2}-1)&q^{2m-2}-1&q^{2m-2}-1&q^{2m-2}-2q+1&q(q^{2m-2}-1) \\
0&0&0&q-2&0
\end{bmatrix}, \\
B_4&=\begin{bmatrix}
0&0&0&0&1 \\
0&\frac{q-4}{2}&\frac{q-2}{2}&0&0 \\
0&\frac{q}{2}&\frac{q-2}{2}&0&0 \\
0&0&0&q-2&0 \\
q-2&0&0&0&q-3
\end{bmatrix}.
\end{align*}

\begin{lem}\label{lem:NS}
The algebra $N(W)$ is symmetric.
\end{lem}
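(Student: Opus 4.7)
I would prove the lemma by exploiting the symmetry of $W$ together with the complex Hadamard property to show that each primitive idempotent of $N(W)$ is a real symmetric matrix, from which the symmetry of the underlying association scheme follows.

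Two features of $W$ drive the argument. First, since $\frak{X}$ is a symmetric scheme each $A_j$ is symmetric, so $W = \sum_{j=0}^{4} w_j A_j$ is symmetric. Second, $|W_{xy}|=1$ gives $\overline{W_{xy}} = W_{xy}^{-1}$, whence $W^{(-)} = \overline{W}$. Combining these yields the pointwise identity $\overline{Y_{ab}} = Y_{ba}$ for all $a,b \in X$, because $\overline{W_{xa}/W_{xb}} = W_{xb}/W_{xa}$.

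Taking the complex conjugate of the defining equation $A Y_{ab} = \theta_{ab}(A)\, Y_{ab}$ for $A \in N(W)$ and applying this identity gives $\overline{A}\, Y_{ba} = \overline{\theta_{ab}(A)}\, Y_{ba}$, so after relabeling indices $\overline{A} \in N(W)$; thus $N(W)$ is closed under complex conjugation. A companion computation, taking the Hermitian adjoint of the eigenvalue equation and again using $\overline{Y_{ab}} = Y_{ba}$, shows that the conditions $A^{\ast} \in N(W)$ and $A^{\top} \in N(W)$ are equivalent. By \cite[Theorem 1]{JMN}, $N(W)$ is a Bose--Mesner algebra; its adjacency matrices $A_i'$ necessarily satisfy $(A_i')^{\top} = A_{\sigma(i)}'$ for some involution $\sigma$ of the index set, and the lemma is equivalent to the claim that $\sigma$ is the identity.

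\textbf{Main obstacle.} The remaining step---showing $\sigma$ is trivial---is the key technical point. My strategy is to identify the range of each primitive idempotent $E_i'$ of $N(W)$ as the linear span of a set of vectors $Y_{ab}$ that is closed under the swap $(a,b) \mapsto (b,a)$. Since $\overline{Y_{ab}} = Y_{ba}$, the range of $E_i'$ would then be invariant under complex conjugation, forcing $\overline{E_i'} = E_i'$, that is, $E_i'$ is real; combined with Hermiticity ($E_i' = (E_i')^{\ast}$, a consequence of $\ast$-closure) this gives $E_i' = (E_i')^{\top}$, and hence the scheme of $N(W)$ is symmetric. The delicate verification is the closure of the common eigenspaces of $N(W)$ under $(a,b) \mapsto (b,a)$, which reduces to the equality $\theta_{ab}(A) = \theta_{ba}(A)$ for all $A \in N(W)$ and all pairs $a,b$; I expect this to follow by a careful tracing of complex conjugation and Hermitian adjoint on $N(W)$, ultimately leveraging Hermiticity of the primitive idempotents and the identity $\overline{Y_{ab}} = Y_{ba}$ established above.
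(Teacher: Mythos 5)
There is a genuine gap, and it is concentrated exactly where you flag the ``main obstacle.'' The preliminary steps are fine: $W$ symmetric with unimodular entries does give $\overline{Y_{ab}}=Y_{ba}$, hence $N(W)$ is closed under complex conjugation and under $\ast$. But the remaining step --- that $Y_{ab}$ and $Y_{ba}$ always lie in the same common eigenspace of $N(W)$, equivalently $\theta_{ab}=\theta_{ba}$ --- is not a verification that can be completed by ``tracing conjugation and adjoints.'' Under the duality of \cite{JMN}, the level sets of $(a,b)\mapsto\theta_{ab}$ are precisely the relations of $N(W^{\top})=N(W)$, so the closure of the eigenspaces under $(a,b)\mapsto(b,a)$ is \emph{equivalent} to the symmetry of $N(W)$; your plan is circular at its decisive point. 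Worse, the needed statement is false for general symmetric complex Hadamard matrices: the Fourier matrix $F_n$ ($n\geq 3$) is symmetric and unimodular, yet $Y_{ab}$ is the character $\chi_{a-b}$ of $\mathbb{Z}_n$, the eigenspaces are the one-dimensional character spaces, $Y_{ba}=\overline{Y_{ab}}$ lies in a different eigenspace whenever $2(a-b)\neq 0$, and $N(F_n)$ is the non-symmetric cyclic group scheme. So no purely formal argument from symmetry and unimodularity of $W$ can prove the lemma; the specific values of $w_1,\dots,w_4$ must enter.

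The paper's proof is of a completely different nature. It invokes \cite[Proposition 6(i)]{JMN}: if $N(W)$ is not symmetric, then $\sum_{x\in X}W_{x,b}^2/W_{x,c}^2=0$ for some $b\neq c$ (this sum is the bilinear norm $\langle Y_{bc},Y_{bc}\rangle$). Collecting terms by the relation containing $(b,c)$, this becomes
\[
\sum_{j<k}p_{jk}^i\bigl(a_{j,k}^2-2\bigr)+\sum_{j=0}^4 p_{jj}^i=0
\]
for some $i\in\{1,2,3,4\}$, with $a_{j,k}$ as in (\ref{aij0802}); a computer calculation then shows this expression is nonzero for the explicit parameters of cases (i) and (ii) of Theorem~\ref{thm:main}. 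If you want to salvage your approach, you must replace the formal final step by a quantitative non-vanishing statement of this kind, which inevitably uses the actual intersection numbers and the values $a_{j,k}$.
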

\begin{proof}
Suppose that $N(W)$ is not symmetric. Then
by \cite[Proposition 6(i)]{JMN}, there exists $(b,c)\in X^2$ with $b\neq c$
such that
\[
\sum_{x\in X}\frac{W_{x,b}^2}{W_{x,c}^2}=0.
\]
This is equivalent to
\[
\sum_{j,k}
p_{jk}^i \frac{w_j^2}{w_k^2}=0
\]
for some $i\in\{1,2,3,4\}$. Using the notation (\ref{aij0802}), we have
\begin{align}
\sum_{j,k}p_{jk}^i \frac{w_j^2}{w_k^2}
&=
\sum_{j<k}p_{jk}^i 
\left(\frac{w_j^2}{w_k^2}
+\frac{w_k^2}{w_j^2}\right)
+\sum_{j=0}^4 p_{jj}^i 
\nnexteq
\sum_{j<k}p_{jk}^i 
\left(\left(\frac{w_j}{w_k}
+\frac{w_k}{w_j}\right)^2-2\right)
+\sum_{j=0}^4 p_{jj}^i 
\nnexteq
\sum_{j<k}p_{jk}^i (a_{j,k}^2-2)
+\sum_{j=0}^4 p_{jj}^i. 
\label{eq:NS}
\end{align}
It can be verified by computer that (\ref{eq:NS})
is nonzero for each of the cases (i)--(ii) in Theorem~\ref{thm:main}.
\end{proof}

Since $N(W)$ is symmetric, the adjacency matrices of $N(W)$ are the
$(0,1)$-ma\-trices representing the connected components of 
the Jones graph defined as follows
(see \cite[Sect.~3.3]{JMN}).
The {\em Jones graph} of a type-II matrix $W\in M_X(\C)$ is the graph
with vertex set $X^2$ such that two distinct vertices $(a,b)$
and $(c,d)$ are adjacent whenever $\langle Y_{ab}, Y_{cd}\rangle\neq0$,
where $\langle \;,\;\rangle$ denotes the ordinary (not Hermitian)
scalar product.

\begin{proof}[Proof of Theorem~{\rm\ref{thm:main2}}]

We claim that $(x,y)$ and $(x,z)$ belong to the same connected component
in the Jones graph whenever $(x,y),(y,z),(z,x)\in R_4$.
Indeed, if $(x,y)$ and $(x,z)$ belong to different connected components, then
$(y,x)$ and $(z,x)$ belong to different connected components by Lemma~\ref{lem:NS}.
In particular, 
\[
\langle Y_{xy},Y_{xz}\rangle=\langle Y_{yx},Y_{zx}\rangle=0.
\]
Let
\[
c_{i,j,k}=|\{u\in X\mid (x,u)\in R_i,\;(y,u)\in R_j,\;(z,u)\in R_k\}|.
\]
Since $p_{1,3}^4=p_{2,3}^4=0$, we have
\begin{equation} \label{cijk}
c_{1,j,k}+c_{2,j,k}=
c_{j,1,k}+c_{j,2,k}=
c_{j,k,1}+c_{j,k,2}=p_{j,k}^4
\end{equation}
for $j,k\in\{1,2\}$. Then we have
\begin{equation} \label{eq:1026}
\sum_{i,j,k=0}^4 c_{i,j,k}\frac{w_i^2}{w_jw_k}=
\sum_{i,j,k=0}^4 c_{i,j,k}\frac{w_jw_k}{w_i^2}=0.
\end{equation}
Since the rank of the coefficient matrix in (\ref{cijk}) is $7$,
we have one degree of freedom in (\ref{cijk}).
Combining (\ref{cijk}) and (\ref{eq:1026}),
it can be verified by computer that these conditions
give rise to a polynomial equation in $q$ which has no solution in positive integers $q\geq 4$ 
for each of the cases (i) and (ii) in Theorem~\ref{thm:main}.

Therefore, we have proved the claim.
This, together with Lemma~\ref{lem:NS}, implies that, for each equivalence class $C$ of the
equivalence relation $R_0\cup R_4$,
$(C\times C)\cap R_4$ belongs to the
same connected component in the Jones graph.

Let $C$ and $C'$ be two distinct equivalence classes of $R_0\cup R_4$.
We claim that,
for any $(x,z)\in C\times C'$, there exist $y\in C$ such that
$\langle Y_{xy},Y_{xz}\rangle\neq0$, and
there exist $y'\in C'$ such that
$\langle Y_{y'z},Y_{xz}\rangle\neq0$.

Suppose
$(x,z)\in R_1$ and 
$\langle Y_{xy},Y_{xz}\rangle=0$ for all $y\in R_4(x)$. Then
\begin{align*}
0&=\sum_{y\in R_4(x)}\langle Y_{xy},Y_{xz}\rangle
\nexteq
\sum_{y\in R_4(x)}\sum_{u\in X} (Y_{xy})_u(Y_{xz})_u
\nexteq
\sum_{y\in R_4(x)}\sum_{u\in X} \frac{W_{xu}^2}{W_{yu}W_{zu}}
\nexteq
\sum_{y\in R_4(x)}\sum_{i,j=0}^4 \sum_{u\in R_i(x)\cap R_j(z)}
\frac{W_{xu}^2}{W_{yu}W_{zu}}
\nexteq
\sum_{i,j=0}^4 \sum_{u\in R_i(x)\cap R_j(z)}
\sum_{k=0}^4 \sum_{y\in R_4(x)\cap R_k(u)}
\frac{w_i^2}{w_kw_j}
\nexteq
\sum_{i,j=0}^4 \sum_{u\in R_i(x)\cap R_j(z)}
\sum_{k=0}^4 p_{4k}^i \frac{w_i^2}{w_kw_j}
\nexteq
\sum_{i,j,k=0}^4 p_{ij}^1
p_{4k}^i \frac{w_i^2}{w_kw_j}.
\end{align*}
It can be verified by computer that this leads to a polynomial
equation in $q$ which has no solution in positive integers $q\geq4$.
Set $\ell\in\{2,3\}$.
Similarly, suppose $(x,z)\in R_{\ell}$ and
$\langle Y_{xy},Y_{xz}\rangle=0$ for all $y\in C$. Then
\[
\sum_{i,j,k=0}^4 p_{ij}^{\ell}
p_{4k}^i \frac{w_i^2}{w_kw_j}=0,
\]
and again this leads to a contradiction.
Thus, there exists $y\in C$
such that $\langle Y_{xy},Y_{xz}\rangle\neq0$.
Switching the role of $x$ and $z$, we see that there exists
$y'\in C'$ such that $\langle Y_{y'z},Y_{xz}\rangle\neq0$.
Therefore, we have proved the claim.

Since $C$ and $C'$ are arbitrary,
the claim shows that, in the Jones graph, $R_4$ is contained in a single connected component,
and that every element $(x,z)\in R_1\cup R_2\cup R_3$ is adjacent to an element
of $R_4$. Thus,
$R_1\cup R_2\cup R_3\cup R_4$ is a connected component of the Jones graph. Therefore, $\dim N(W)=2$.
\end{proof}

\newpage

\appendix
\section{Verification by Magma}

\section*{Proof of Theorem~\ref{thm:main}}
\begin{verbatim}
d:=4;
d2s:=&cat[[[i,j]:j in [i+1..d]]:i in [0..d-1]];
d2:=[Seqset(s):s in d2s];
R:=PolynomialRing(Rationals(),#d2+3);
X:=func<i,j|R.Position(d2,{i,j})>;
q:=R.(#d2+1);
r:=R.(#d2+2);
nz1:=R.(#d2+3);
NZ1:=nz1*(q-1)-1;
qm:=q*r;

g:=func<i,j,k|X(i,j)^2+X(i,k)^2+X(j,k)^2-X(i,j)*X(i,k)*X(j,k)-4>;
h:=func<i,j,k,l|(X(k,l)^2-4)*X(i,j)
 -X(k,l)*(X(k,i)*X(l,j)+X(k,j)*X(l,i))
 +2*(X(k,i)*X(k,j)+X(l,i)*X(l,j))>;

eigenP:=Matrix(R,5,5,[
1,1/2*qm*r*(q-2),1/2*qm^2,q*(r^2-1),q-2,
1,1/2*r*(q-2),1/2*qm,-(r+1)*(q-1),q-2,
1,-1/2*r*(q-2),-1/2*qm,(r-1)*(q-1),q-2,
1,1/2*qm,-1/2*qm,0,-1,
1,-1/2*qm,1/2*qm,0,-1
]);
P:=func<i,j|eigenP[i+1,j+1]>;
n:=&+[P(0,i):i in [0..d]];
n eq qm^2-1;

e:=func<i|-n+&+[P(i,j)^2:j in [0..d]]
 +&+[P(i,j[1])*P(i,j[2])*X(j[1],j[2]):j in d2s]>; //eq:21
s3:=[Setseq(x):x in Subsets({0..d},3)];
eq7:=[g(i[1],i[2],i[3]):i in s3] cat
 [h(0^i,1^i,2^i,3^i):i in Sym({0..d})] cat
 [e(i):i in [1..d]];
I:=ideal<R|eq7>;
\end{verbatim}

\subsection*{Proof of Lemma~\ref{lem:a01}}
\begin{verbatim}
I1:=ideal<R|I,X(0,4)-2>;
pa12:=qm^2*X(1,2)+2*(qm^2-2);
pa12 in I1; //Lemma 3
\end{verbatim}

\subsection*{Proof of Lemma~\ref{lem:a02eq2}}
\begin{verbatim}
I2:=ideal<R|I1,X(0,1)-2>;
(r^2-1)^2*(X(0,3)-2)^2 in I2;
\end{verbatim}

\subsection*{Proof of Lemma~\ref{lem:a02d}}
\begin{verbatim}
pa01:=qm*r*(qm*r+q-2)*X(0,1)-2*(qm^2*r^2-qm^2-2*qm*r+2);
I3:=ideal<R|I1,pa01>;
pa02:=q*(qm*r+q-2)*X(0,2)+2*qm^2-2*q^2+4*q-4;
ff:=(qm^2-1)*(148*r^4-8103*r^2+8214*q^2-46102*q+42957)
*((q+1)*(2*q-1)*qm^4*r^6+(2*q+1)*(q^2-7*q+4)*qm^3*r^5
 +(5*q^3+27*q^2-22*q-4)*qm^2*r^4-(q^3+11*q^2+46*q-56)*qm^2*r^2
 +8*(q+6)*(q-1)*q*r^2-16*q+16);
pa02*ff in I3;

I4:=ideal<R|I3,pa02>;
(r^2-1)*(X(0,3)-2)^2 in I4;
//Total time: 88023.889 seconds, Total memory usage: 253.50MB
\end{verbatim}

\subsection*{Proof of Theorem~\ref{thm:main}}
\begin{verbatim}
Pqr:=PolynomialRing(Rationals(),2);
Fqr<q,r>:=FieldOfFractions(Pqr);
Rqr<z1>:=PolynomialRing(Fqr);
qm:=q*r;
a01:=2*(qm^2*r^2-(q+2)*qm*r+2)/(qm*r*(qm*r+q-2));
a02:=-2*(qm^2-q^2+2*q-2)/(q*(qm*r+q-2));
a12:=-2*(qm^2-2)/(qm^2);
F<w1>:=FieldOfFractions(Rqr/ideal<Rqr|z1^2-a01*z1+1>);
w1^2-a01*w1+1 eq 0;
w2:=-((q-1)*qm*r*w1+qm*r+q-2)/((qm*r-1)*q);
w1/w2+w2/w1 eq a12;
w2+1/w2 eq a02;

d:=4;
d2s:=&cat[[[i,j]:j in [i+1..d]]:i in [0..d-1]];
d2:=[Seqset(s):s in d2s];

R:=PolynomialRing(F,#d2);
X:=func<i,j|R.Position(d2,{i,j})>;
g:=func<i,j,k|X(i,j)^2+X(i,k)^2+X(j,k)^2-X(i,j)*X(i,k)*X(j,k)-4>;
h:=func<i,j,k,l|(X(k,l)^2-4)*X(i,j)
 -X(k,l)*(X(k,i)*X(l,j)+X(k,j)*X(l,i))
 +2*(X(k,i)*X(k,j)+X(l,i)*X(l,j))>;

eigenP:=Matrix(F,5,5,[
1,1/2*qm*r*(q-2),1/2*qm^2,q*(r^2-1),q-2,
1,1/2*r*(q-2),1/2*qm,-(r+1)*(q-1),q-2,
1,-1/2*r*(q-2),-1/2*qm,(r-1)*(q-1),q-2,
1,1/2*qm,-1/2*qm,0,-1,
1,-1/2*qm,1/2*qm,0,-1
]);
P:=func<i,j|eigenP[i+1,j+1]>;
n:=&+[P(0,i):i in [0..d]];
n eq qm^2-1;

e:=func<i|-n+&+[P(i,j)^2:j in [0..d]]
 +&+[P(i,j[1])*P(i,j[2])*X(j[1],j[2]):j in d2s]>;
s3:=[Setseq(x):x in Subsets({0..d},3)];
eq7:=[g(i[1],i[2],i[3]):i in s3] cat
 [h(0^i,1^i,2^i,3^i):i in Sym({0..d})] cat
 [e(i):i in [1..d]];

subs1:=[a01,a02,2,2,a12,a01,a01,a02,a02,2];
&and[Evaluate(f,subs1) eq 0:f in eq7];
//Total time: 0.440 seconds, Total memory usage: 32.09MB
\end{verbatim}

\section*{Proof of Theorem~\ref{thm:main2}}
Calculation of the intersection matrices $\{B_i\}_{i=0}^4$:
\begin{verbatim}
P<c111,c112,c121,c122,c211,c212,c221,c222,w1,w2,q,r>
 :=PolynomialRing(Rationals(),12);
F:=FieldOfFractions(P);
qm:=q*r;
n:=qm^2-1;

eigenP:=Matrix(F,5,5,[
1,1/2*qm*r*(q-2),1/2*qm^2,q*(r^2-1),q-2,
1,1/2*r*(q-2),1/2*qm,-(r+1)*(q-1),q-2,
1,-1/2*r*(q-2),-1/2*qm,(r-1)*(q-1),q-2,
1,1/2*qm,-1/2*qm,0,-1,
1,-1/2*qm,1/2*qm,0,-1
]);

intersectionMatrices:=function(P)
  d1:=Nrows(P);
  n:=&+Eltseq(P[1]);
  Q:=n*P^(-1);
   return [ Matrix(Parent(Q[1][1]),d1,d1,
      [ [ 1/(n*P[1,k])*&+[ Q[1,l]*P[l,i]*P[l,j]*P[l,k] : l in [1..d1] ]
      : k in [1..d1] ] : j in [1..d1] ]
      ) : i in [1..d1] ];
end function;

B1:=Matrix(F,5,5,[0,1,0,0,0,
qm*r*(q-2)/2,r^2*(q-2)^2/4,r^2*(q-2)^2/4,r^2*(q-2)^2/4,(q-4)*qm*r/4,
0,(q-2)*qm*r/4,(q-2)*qm*r/4,(q-2)*qm*r/4,qm^2/4,
0,(q-2)*(r^2-1)/2,(q-2)*(r^2-1)/2,(q-2)*r^2/2,0,
0,1/2*q-2,1/2*q-1,0,0]);

B2:=Matrix(F,5,5,[0,0,1,0,0,
0,(q-2)*qm*r/4,(q-2)*qm*r/4,(q-2)*qm*r/4,qm^2/4,
qm^2/2,qm^2/4,qm^2/4,qm^2/4,qm^2/4,
0,q*(r^2-1)/2,q*(r^2-1)/2,1/2*qm*r,0,
0,1/2*q,1/2*q-1,0,0]);

B3:=Matrix(F,5,5,[0,0,0,1,0,
0,(q-2)*(r^2-1)/2,(q-2)*(r^2-1)/2,(q-2)*r^2/2,0,
0,q*(r^2-1)/2,q*(r^2-1)/2,1/2*qm*r,0,
q*(r^2-1),r^2-1,r^2-1,r^2-2*q+1,q*(r^2-1),
0,0,0,q-2,0]);

B4:=Matrix(F,5,5,[0,0,0,0,1,
0,1/2*q-2,1/2*q-1,0,0,
0,1/2*q,1/2*q-1,0,0,
0,0,0,q-2,0,
q-2,0,0,0,q-3]);

BB:=[ScalarMatrix(5,F!1),B1,B2,B3,B4];
BB eq intersectionMatrices(eigenP);
pijk:=func<i,j,k|P!BB[i+1][j+1,k+1]>;
\end{verbatim}

\subsection*{Proof of Lemma~\ref{lem:NS}}
\begin{verbatim}
isSymNbas:=function(ajk)
 aijs:=[[ajk[1],ajk[2],ajk[3],ajk[4]],
  		[1,ajk[5],ajk[6],ajk[7]],
		[1,1,ajk[8],ajk[9]],
		[1,1,1,ajk[10]]];
 aij:=func<i,j|aijs[i+1,j]>;
 ff:=[F|&+[pijk(j,k,i)*(aij(j,k)^2-2):j,k in [0..4]|j lt k]
  +&+[pijk(j,j,i):j in [0..4]]:i in [1..4]];
 return [Numerator(ff[i]):i in [1..4]];
end function;

x02:=-(2*q^2*r^2-4)/(q^2*r^2);
aa:=[2,x02,2,2,x02,2,2,x02,x02,2];
isSymNbas(aa) eq [ (qm^2-1)*(qm^2-4) :i in [1..4]];

a01:=2*(qm^2*r^2-(q+2)*qm*r+2)/(qm*r*(qm*r+q-2));
a02:=-2*(qm^2-q^2+2*q-2)/(q*(qm*r+q-2));
a12:=-2*(qm^2-2)/(qm^2);
aa:=[a01,a02,2,2,a12,a01,a01,a02,a02,2];
pp:=qm^5*r+2*(q^2-10*q+14)*qm^3*r+
 q*(q-2)*(q^3-2*q^2+8*q+16)*r^2-4*(q-2)*(q^2-2*q+4);
isSymNbas(aa) eq [ (qm^2-1)*pp : i in [1..3] ] cat
 [ (qm^2-1)*(qm^2-4) ];
\end{verbatim}

\subsection*{Proof of Theorem~\ref{thm:main2}}

\subsection*{The first claim:}
\begin{verbatim}
varname:=[[i,j,k]:i,j,k in [1,2]];
c:=func<i,j,k|R.Position(varname,[i,j,k])>;
w0:=1;

cijk:=function(i,j,k)
  if 0 in {i,j,k} then
      if [i,j,k] in {[0,4,4],[4,0,4],[4,4,0]} then
      return 1;
      else 
      return 0;
      end if;
  else
     if 3 in {i,j,k} then
       if {3} eq {i,j,k} then
       return pijk(3,3,4);
       else
       return 0;
       end if;
     else
       if 4 in {i,j,k} then
	     if {4} eq {i,j,k} then
	     return pijk(4,4,4)-1;
	     else
	     return 0;
	     end if;
	   else
	   return c(i,j,k);
       end if;
     end if;
  end if;
end function;

fx:=[cijk(1,j,k)+cijk(2,j,k)-pijk(j,k,4):j,k in [1,2]];
fy:=[cijk(j,1,k)+cijk(j,2,k)-pijk(j,k,4):j,k in [1,2]];
fz:=[cijk(j,k,1)+cijk(j,k,2)-pijk(j,k,4):j,k in [1,2]];
fxyz:=fx cat fy cat fz;

a02N1:=-(2*q^2*r^2-4);
a02D1:=q^2*r^2;
a021:=a02N1/a02D1;
fa21:=a02D1*w2^2-a02N1*w2+1;
ww1:=[w0,1,w2,1,1];

alpha1:=func<i|ww1[i+1]>;
ff1:=&+[cijk(i,j,k)*alpha1(i)^2/(alpha1(j)*alpha1(k))
 :i,j,k in [0..4]];
gg1:=&+[cijk(i,j,k)*alpha1(j)*alpha1(k)/alpha1(i)^2
 :i,j,k in [0..4]];
I1:=ideal<R|[fa21,Numerator(ff1),Numerator(gg1)] cat fxyz>;
(qm^2-1)*(5*qm^6-90*qm^4+313*qm^2-128) in I1;
IsIrreducible(5*qm^6-90*qm^4+313*qm^2-128);

a01N2:=2*(qm^2*r^2-(q+2)*qm*r+2);
a01D2:=(qm*r+q-2)*qm*r;
a01:=a01N2/a01D2;
a022:=-2*(qm^2-q^2+2*q-2)/(q*(qm*r+q-2));
a12:=-2*(qm^2-2)/(qm^2);
fa1:=a01D2*w1^2-a01N2*w1+a01D2;
w2:=(w1^2-1)/(a12*w1-a02);
ww2:=[w0,w1,w2,1,1];

alpha2:=func<i|ww2[i+1]>;
ff2:=&+[cijk(i,j,k)*alpha2(i)^2/(alpha2(j)*alpha2(k))
 :i,j,k in [0..4]];
gg2:=&+[cijk(i,j,k)*alpha2(j)*alpha2(k)/alpha2(i)^2
 :i,j,k in [0..4]];
I2:=ideal<R|[fa1,Numerator(ff2),Numerator(gg2)] cat fxyz>;
Basis(EliminationIdeal(I2,{q,r}))
 eq [qm^10*(qm^2-1)^3*(qm*r+q-2)^4*(qm*r-1)^5];
\end{verbatim}

\subsection*{The second claim:}
\begin{verbatim}
tl:=function(l)
 return &+[pijk(i,j,l)*pijk(4,k,i)*alpha1(i)^2/(alpha1(j)*alpha1(k))
  :i,j,k in [0..4]]; 
end function;
&and[ (q-2)*(qm^2-1)*(5*qm^6-90*qm^4+313*qm^2-128) in 
 ideal<R|[fa21,Numerator(tl(l))]> : l in [1..3] ];
 
sl:=function(l)
 return &+[pijk(i,j,1)*pijk(4,k,i)*alpha2(i)^2/(alpha2(j)*alpha2(k))
  :i,j,k in [0..4]]; 
end function;
&and[ qm^7*r*(q-2)*(qm^2-1)^3*(qm*r-1)^5*(qm*r+q-2)^5 in 
 ideal<R|[fa1,Numerator(sl(l))]> : l in [1..3] ];
//Total time: 34.890 seconds, Total memory usage: 82.78MB
\end{verbatim}

\end{document}